\newcolumntype{R}[1]{>{\raggedleft\arraybackslash }b{#1}}
\newcolumntype{L}[1]{>{\raggedright\arraybackslash }b{#1}}
\newcolumntype{C}[1]{>{\centering\arraybackslash }b{#1}}
\newcounter{minutes}\setcounter{minutes}{\time}
\newcounter{hours}\setcounter{hours}{\time}
\newtheorem{theorem}{Theorem}
\newtheorem{lemma}{Lemma}
\newtheorem{corollary}{Corollary}
\newtheorem{example}{Example}
\title[New integral representations for the Fox--Wright functions]{New integral representations for the Fox-Wright functions and its applications II}
\author[K. Mehrez]{Khaled Mehrez}
\address{Khaled Mehrez. D\'epartement de Math\'ematiques, Facult\'e de Sciences de Tunis, Universit\'e Tunis El Manar, Tunisia.}
\address{D\'epartement de Math\'ematiques ISSAT Kasserine, Universit\'e de Kairouan, Tunisia}
\email{k.mehrez@yahoo.fr}
\keywords{Fox-Wright function, Fox's H-function, complete monotonicity, Log--convexity, Tur\'an type inequalities, Generalized Stieltjes function.}
\subjclass[2010]{33C20; 33E20; 26D07; 26A42; 44A10.}
\begin{document}

\def\thefootnote{}
\footnotetext{ \texttt{File:~\jobname .tex,
          printed: \number\year-0\number\month-\number\day,
          \thehours.\ifnum\theminutes<10{0}\fi\theminutes}
} \makeatletter\def\thefootnote{\@arabic\c@footnote}\makeatother

\maketitle
\begin{abstract}
In this paper our aim is to establish new integral representations for the Fox--Wright function ${}_p\Psi_q[^{(\alpha_p,A_p)}_{(\beta_q,B_q)}|z]$ 
when
$$\mu=\sum_{j=1}^q\beta_j-\sum_{k=1}^p\alpha_k+\frac{p-q}{2}=-m,\;\;m\in\mathbb{N}_0.$$
In particular, closed-form integral expressions are derived for the four parameter Wright function under
a special restriction on parameters. Exponential bounding inequalities are derived for a class of the
Fox-Wright function. Moreover, complete monotonicity property is presented for these functions.
\end{abstract}

\section{Introduction}
We use a definition of the Fox-Wright ( generalized hypergeometric) function by its series 
\begin{equation}\label{11}
{}_p\Psi_q\Big[_{(\beta_1,B_1),...,(\beta_q,B_q)}^{(\alpha_1,A_1),...,(\alpha_p,A_p)}\Big|z \Big]={}_p\Psi_q\Big[_{(\beta_q,B_q)}^{(\alpha_p,A_p)}\Big|z \Big]=\sum_{k=0}^\infty\frac{\prod_{i=1}^p\Gamma(\alpha_l+kA_l)}{\prod_{j=1}^q\Gamma(\beta_l+kB_l)}\frac{z^k}{k!},
\end{equation}
$$\left(\alpha_i,\beta_j\in\mathbb{C},\;\textrm{and}\;\;A_i,B_j\in\mathbb{R}^+\; (i=1,...,p,j=1,...,q)\right),$$
where, as usual,
$$\mathbb{N}=\left\{1,2,3,...\right\},\;\mathbb{N}_0=\mathbb{N}\cup\left\{0\right\},$$
$\mathbb{R},\;\mathbb{R}_+$ and $\mathbb{C}$ stand for the sets of real, positive real and complex numbers, respectively. 
This function was first introduced by Wright \cite{WR} in 1935,  who also derived some of its important
properties including asymptotic behavior.

The convergence conditions and convergence radius of the series at the right-
hand side of (\ref{11}) immediately follow from the known asymptotic of the Euler
Gamma-function. To formulate the results, let us first
introduce the following notations:
\begin{equation}
\Delta=\sum_{j=1}^q B_j-\sum_{i=1}^p A_i,\;\rho=\left(\prod_{i=1}^p A_i^{-A_i}\right)\left(\prod_{j=1}^q B_j^{B_j}\right),\;\;\mu=\sum_{j=1}^q\beta_j-\sum_{k=1}^p\alpha_k+\frac{p-q}{2}
\end{equation}
The defining series in (\ref{11}) converges in the whole complex $z$-plane if $\Delta>-1.$
If $\Delta=-1,$ then the series in (\ref{11}) converges for $|z|<\rho,$ and $|z|=\rho$ under the condition $\Re(\mu)>\frac{1}{2},$ see \cite{24} for details. If, in the definition (\ref{11}), we set
$$A_1=...=A_p=1\;\;\;\textrm{and}\;\;\;B_1=...=B_q=1,$$
we get the relatively more familiar generalized hypergeometric function ${}_pF_q[.]$ given by
\begin{equation}\label{rrrrr}
{}_p F_q\left[^{\alpha_1,...,\alpha_p}_{\beta_1,...,\beta_q}\Big|z\right]=\frac{\prod_{j=1}^q\Gamma(\beta_j)}{\prod_{i=1}^p\Gamma(\alpha_i)}{}_p\Psi_q\Big[_{(\beta_1,1),...,(\beta_q,1)}^{(\alpha_1,1),...,(\alpha_p,1)}\Big|z \Big],\;\;(\alpha_j>0,\;\beta_j\notin\mathbb{Z}_0^-).
\end{equation}
Moreover, both the Wright function $W_{\alpha,\beta}(.)$ and the Mittag-Leffler function $E_{\alpha,\beta}(z)$,
  are particular cases of the Fox-Wright function (\ref{11}):
$$W_{\alpha,\beta}(z)={}_0\Psi_1\left[_{(\beta,\alpha)}^{\;\;-}\Big|z\right],\;E_{\alpha,\beta}(z)={}_1\Psi_1\left[_{(\beta,\alpha)}^{(1,1)}\Big|z\right]$$

Note  important properties for this functions including its Tur\'an, Lazarevi\'c and Wilker type inequalities, was proved by Mehrez \cite{Me1} and Mehrez et al in \cite{SiMe1},\cite{ME3}.

In a recent papers \cite{45},\cite{46},\cite{47}, the author have studied certain  advanced properties of the Fox-Wright function including its new integral representations, the Laplace and Stieltjes transforms, Luke
inequalities, Tur\'an type inequalities and completely monotonicity property are derived. In particular, it was
shown there that the following Fox-Wright functions are completely monotone:
$${}_p\Psi_q\Big[_{(\beta_q,A)}^{(\alpha_p,A)}\Big|-z \Big],\;\;z>0,$$
$${}_{p+1}\Psi_q\Big[_{(\beta_q,1)}^{(\lambda,1),(\alpha_p,A_p)}\Big|\frac{1}{z} \Big],\;\;z>0,$$
and has proved that the Fox's H-function $H_{q,p}^{p,0}[.]$ constitutes the representing measure for the Fox-Wright function ${}_p\Psi_q[.]$, if $\mu>0,$ i.e., \cite[Theorem 1]{45}
\begin{equation}\label{motivation}
{}_p\Psi_q\Big[_{(\beta_q, B_q)}^{(\alpha_p, A_p)}\Big|z\Big]=\int_0^\rho e^{zt}H_{q,p}^{p,0}\left(t\Big|^{(B_q,\beta_q)}_{(A_p,\alpha_p)}\right)\frac{dt}{t}.
\end{equation}
when $\mu>0.$ Here, and in what follows, we use $H_{q,p}^{p,0}[.]$ to denote the Fox's $H$-function, defined by
\begin{equation}\label{5555}
H_{q,p}^{p,0}\left(z\Big|^{(B_q,\beta_q)}_{(A_p,\alpha_p)}\right)=\frac{1}{2i\pi}\int_{\mathcal{L}}\frac{\prod_{j=1}^p\Gamma (A_j s+\alpha_j)}{\prod_{k=1}^q\Gamma (B_k s+\beta_k)}z^{-s}ds,
\end{equation}
where $A_j, B_k>0$ and $\alpha_j,\beta_k>0$.
The contour $\mathcal{L}$ has one of the following forms :
	\begin{itemize}
		\item $\mathcal{L}=\mathcal{L}_{-\infty}$ is a left loop in a horizontal strip starting at the point $-\infty+i\varphi_1$ and terminating at the point $-\infty+i\varphi_2$  with $-\infty< \varphi_1 < \varphi_2 < \infty;$
		\item $\mathcal{L}=\mathcal{L}_{\infty}$ is a right loop in a horizontal strip starting at the point $\infty+i\varphi_1$ and terminating at the point $\infty+i\varphi_2$  with $\infty< \varphi_1 < \varphi_2 < \infty;$
		\item $\mathcal{L}=\mathcal{L}_{i\gamma\infty}$ is a contour starting at the point $\gamma-i\infty$ and terminating at the point $\gamma+i\infty+i\varphi_2,$  where $\gamma\in\mathbb{R}.$ 
		\end{itemize}
		Details regarding the contour and conditions for convergence of the integral in (\ref{5555}) can be
found in \cite[Sections 1.1,1.2]{FF} and \cite{AA}.

 In the course of our investigation, the first main tools is extended some results proved in \cite{45} in the case when $\mu=-m,\;m\in\mathbb{N}_0.$ Secondly, we establish the monotonicity of ratios involving the Fox-Wright functions.

\section{Main results}

The following theorem leads to an extension of the integral equation for the 
$H$-function obtained in (\ref{motivation}) to the case $\mu=-m,\;m\in\mathbb{N}_0.$

\begin{theorem}\label{T1} Suppose that $\mu=-m,\;m\in\mathbb{N}_0$ and $\displaystyle{\sum_{i=1}^p  A_i=\sum_{j=1}^q  B_j}$. If $\gamma \geq1$, then the Fox-Wright function ${}_p\Psi_q[.]$ possesses the following integral representation
\begin{equation}\label{Youssef}
{}_p\Psi_q\Big[_{(\beta_q, B_q)}^{(\alpha_p, A_p)}\Big|z\Big]=\int_0^\rho e^{zt}H_{q,p}^{p,0}\left(t\Big|^{(B_q,\beta_q)}_{(A_p,\alpha_p)}\right)\frac{dt}{t}+\eta\sum_{k=0}^\infty \sum_{j=0}^m \frac{l_{m-j}k^j\rho^k z^k}{k!},
\end{equation}
where the coefficients $\eta$ and $\gamma$ are defined by
\begin{equation}\label{nu}
\eta=(2\pi)^{\frac{p-q}{2}}\prod_{i=1}^p A_i^{\alpha_i-\frac{1}{2}}\prod_{j=1}^q B_j^{\frac{1}{2}-\beta_j},\;\;\gamma=-\min_{1\leq j\leq p}(\alpha_j/A_j),
\end{equation}
and the coefficients $l_r$ satisfy the recurrence relation:
\begin{equation}\label{lr}
l_r=\frac{1}{r}\sum_{n=1}^r q_n l_{r-n},\;\;\;l_0=1,\;\;\textrm{with}\;\;q_n=\frac{(-1)^{n+1}}{n+1}\left[\sum_{i=1}^p\frac{\mathcal{B}_{n+1}(\alpha_i)}{A_i^n}-\sum_{j=1}^q\frac{\mathcal{B}_{n+1}(\beta_j)}{B_j^n}\right],
\end{equation}
where $\mathcal{B}_{n}$ is the Bernoulli polynomial defined via generating function \cite[p. 588]{QQ}
$$\frac{te^{at}}{e^t-1}=\sum_{n=0}^\infty\mathcal{B}_{n}(a)\frac{t^n}{n!},\;\;|t|<2\pi.$$
\end{theorem}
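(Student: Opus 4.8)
The plan is to pass to the power series in $z$ and match Taylor coefficients. Set
\[
c_k=\frac{\prod_{i=1}^p\Gamma(\alpha_i+kA_i)}{\prod_{j=1}^q\Gamma(\beta_j+kB_j)},
\]
so that the left-hand side of (\ref{Youssef}) equals $\sum_{k\ge0}\frac{c_k}{k!}z^k$. Expanding $e^{zt}=\sum_{k\ge0}(zt)^k/k!$ inside the integral and interchanging summation with integration (to be justified at the end), the assertion (\ref{Youssef}) is equivalent to the family of scalar identities
\[
c_k=\int_0^\rho t^{k-1}H_{q,p}^{p,0}\!\left(t\Big|^{(B_q,\beta_q)}_{(A_p,\alpha_p)}\right)dt+\eta\,\rho^{k}P_m(k),\qquad P_m(k):=\sum_{j=0}^{m}l_{m-j}\,k^{j},\quad k\ge0,
\]
because the $z^k$-coefficient of the double sum in (\ref{Youssef}) is exactly $\eta\rho^kP_m(k)/k!$. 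Thus the whole statement reduces to identifying the \emph{defect} between the coefficient $c_k$ and the Mellin moment $\int_0^\rho t^{k-1}H_{q,p}^{p,0}(t)\,dt$ as the finite expression $\eta\rho^kP_m(k)$, after which one simply resums against $z^k/k!$.

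The first step is to establish the large-argument expansion of $c_s$, now read as a function of a continuous variable. Feeding each gamma factor into the Stirling--Bernoulli series
\[
\log\Gamma(z+a)=\Big(z+a-\tfrac12\Big)\log z-z+\tfrac12\log(2\pi)+\sum_{n\ge1}\frac{(-1)^{n+1}\mathcal{B}_{n+1}(a)}{n(n+1)}\,z^{-n},
\]
with $z=sA_i,\ a=\alpha_i$ (respectively $z=sB_j,\ a=\beta_j$) and summing, the hypothesis $\sum_iA_i=\sum_jB_j$ annihilates the $s\log s$ term; the coefficient of $\log s$ collapses to $-\mu=m$; the linear-in-$s$ contributions combine into $s\log\rho$, producing the factor $\rho^{s}$; the genuinely constant terms assemble into $\log\eta$ (this is precisely how the prefactor $(2\pi)^{(p-q)/2}\prod_iA_i^{\alpha_i-1/2}\prod_jB_j^{1/2-\beta_j}$ of (\ref{nu}) arises); and the coefficient of $s^{-n}$ equals $q_n/n$, with $q_n$ as in (\ref{lr}). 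Exponentiating the tail, the identity $\exp\!\big(\sum_{n\ge1}\frac{q_n}{n}x^{n}\big)=\sum_{r\ge0}l_r x^{r}$ is, upon differentiation, exactly the recurrence $l_r=\frac1r\sum_{n=1}^rq_nl_{r-n}$ of (\ref{lr}). Hence
\[
c_s\sim \eta\,\rho^{s}\,s^{m}\sum_{r\ge0}l_r\,s^{-r}\qquad(\Re s\to+\infty),
\]
so that on any vertical line one has $|c_{\sigma+i\tau}|\sim C|\tau|^{m}$, and the polynomial-in-$s$ part of the expansion is $\eta\rho^{s}P_m(s)$.

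The mechanism behind the correction is now transparent. Because $\sum_iA_i=\sum_jB_j$ (that is, $\Delta=0$ in the notation of Section~1), the Mellin--Barnes integrand in (\ref{5555}) decays only \emph{algebraically} on vertical lines, at the rate $|\tau|^{-\mu}=|\tau|^{m}$; hence for $\mu=-m\le0$ the integral ceases to converge absolutely, and the naive identity $\int_0^\rho t^{k-1}H_{q,p}^{p,0}(t)\,dt=c_k$ of (\ref{motivation}) fails by precisely the contribution of the $m+1$ leading asymptotic terms. Concretely I would subtract the polynomial part, writing $c_s=\eta\rho^{s}P_m(s)+\big(c_s-\eta\rho^{s}P_m(s)\big)$: the remainder is $O(|s|^{-1})$ on vertical lines, hence it is the Mellin transform of the genuinely locally integrable part of $H_{q,p}^{p,0}$ on $(0,\rho)$, to which the argument producing (\ref{motivation}) applies verbatim. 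This gives $\int_0^\rho e^{zt}H_{q,p}^{p,0}(t)\frac{dt}{t}=\sum_{k\ge0}\frac{c_k-\eta\rho^{k}P_m(k)}{k!}z^k$, and rearranging reproduces the double sum of (\ref{Youssef}).

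I expect the borderline regularization in the last step to be the main obstacle. The remainder $c_s-\eta\rho^{s}P_m(s)=O(|s|^{-1})$ only \emph{just} fails to be absolutely integrable along a vertical line, so to legitimately identify it with the Mellin transform of an $L^1_{\mathrm{loc}}$ density on $(0,\rho)$ -- and to justify the termwise interchange $\sum_k\int_0^\rho=\int_0^\rho\sum_k$ that opened the proof -- one must upgrade this bound by an integration by parts (equivalently an Abel summation) in $\tau$, or by carrying the expansion one order further and controlling the resulting tail. This is exactly where the condition $\gamma\ge1$ is used: it allows the contour in (\ref{5555}) to be placed so that the pole at $s=k$ lies strictly on the correct side for every $k\ge1$ and so that the edge behaviour of $H_{q,p}^{p,0}$ as $t\to0$ is integrable against $t^{k-1}$, legitimizing all the contour shifts and interchanges; the balance $\sum_iA_i=\sum_jB_j$ is what pins the vertical growth to the integer rate $|\tau|^{m}$, so that removing exactly the degree-$m$ polynomial $P_m$ is both necessary and sufficient.
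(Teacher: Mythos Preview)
Your proof follows the same skeleton as the paper's: expand $e^{zt}$, interchange sum and integral, and reduce (\ref{Youssef}) to the scalar identity
\[
c_k=\int_0^\rho t^{k-1}H_{q,p}^{p,0}\!\left(t\Big|^{(B_q,\beta_q)}_{(A_p,\alpha_p)}\right)dt+\eta\,\rho^{k}P_m(k),\qquad k\ge0.
\]
The paper, however, does not prove this identity: it quotes it verbatim as \cite[Theorem~2]{Karp3} (equation~(\ref{eA})) and then simply sums against $z^k/k!$. Your Stirling--Bernoulli computation is in effect a sketch of how Karp and Prilepkina obtain that formula, and the ``main obstacle'' you flag---the borderline $O(|s|^{-1})$ decay of $c_s-\eta\rho^{s}P_m(s)$ on vertical lines---is exactly the technical point their paper settles. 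So your argument is correct and strictly more self-contained, at the price of re-proving a nontrivial lemma the paper takes off the shelf.

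For the interchange of $\sum_k$ and $\int_0^\rho$, the paper works on the $t$-side rather than the Mellin side: it bounds $\int_0^\rho t^{k-1}|H_{q,p}^{p,0}(t)|\,dt$ uniformly in $k$ using the endpoint asymptotics $H_{q,p}^{p,0}(t)=O(t^{-\gamma})$ as $t\to0$ (this is where the hypothesis on $\gamma$ enters, guaranteeing that $t^{-1}H_{q,p}^{p,0}(t)$ is integrable at the origin) together with the behaviour near $t=\rho$ supplied by \cite[Theorem~1]{Karp3}, and then invokes dominated convergence. This is more direct than the Abel-summation/integration-by-parts route you propose, though both reach the same conclusion.
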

\begin{proof}In \cite[Theorem 2]{Karp3}, the authors found the Mellin transform of the $H$-function when $\mu=-m,\;m\in\mathbb{N}_0,$ that is
\begin{equation}\label{eA}
\frac{\prod_{i=1}^p\Gamma(A_i k+\alpha_i)}{\prod_{j=1}^q \Gamma(B_j k+ \beta_j)}=\int_0^\rho H_{q,p}^{p,0}\left(t\Big|^{(B_q,\beta_q)}_{(A_p,\alpha_p)}\right)t^{k-1}dt+\eta\rho^k\sum_{j=0}^m l_{m-j} k^j,\;\Re(k)>\gamma.
\end{equation}
This implies that
\begin{equation*}
\begin{split}
{}_p\Psi_q\Big[_{(\beta_q, B_q)}^{(\alpha_p, A_p)}\Big|z\Big]&=\sum_{k=0}^\infty\frac{\prod_{i=1}^p\Gamma(A_i k+\alpha_i)z^k}{k!\prod_{j=1}^q \Gamma(B_j k+ \beta_j)}\\
&=\sum_{k=0}^\infty \int_0^\rho H_{q,p}^{p,0}\left(t\Big|^{(B_q,\beta_q)}_{(A_p,\alpha_p)}\right)\frac{(zt)^{k}}{k!}\frac{dt}{t}+\sum_{k=0}^\infty\left(\eta\rho^k\sum_{j=0}^m l_{m-j} \frac{k^j z^k}{k!}\right)\\
&=\sum_{k=0}^\infty \int_0^\rho H_{q,p}^{p,0}\left(t\Big|^{(B_q,\beta_q)}_{(A_p,\alpha_p)}\right)\frac{(zt)^{k}}{k!}\frac{dt}{t}+\eta\sum_{k=0}^\infty \sum_{j=0}^m \frac{l_{m-j}k^j\rho^k z^k}{k!}\\
&=\int_0^\rho H_{q,p}^{p,0}\left(t\Big|^{(B_q,\beta_q)}_{(A_p,\alpha_p)}\right)\left(\sum_{k=0}^\infty \frac{(zt)^{k}}{k!}\right)\frac{dt}{t}+\eta\sum_{k=0}^\infty \sum_{j=0}^m \frac{l_{m-j}k^j\rho^k z^k}{k!}\\
&=\int_0^\rho e^{zt}H_{q,p}^{p,0}\left(t\Big|^{(B_q,\beta_q)}_{(A_p,\alpha_p)}\right)\frac{dt}{t}+\eta\sum_{k=0}^\infty \sum_{j=0}^m \frac{l_{m-j}k^j\rho^k z^k}{k!}.
\end{split}
\end{equation*}
For the exchange of the summation and integration, we use the asymptotic behavior of the $H$-function as $z\rightarrow0$ \cite[Theorem 1.2, Eq. 1.94]{AA}
\begin{equation}\label{asy}
H_{q,p}^{m,n}(z)=\mathcal{O}(z^{-\gamma}), \;|z|\longrightarrow0,
\end{equation}
and the asymptotic behavior of the $H$-function as $z\rightarrow\rho$ \cite[Theorem 1]{Karp3}, we obtain
$$\int_0^\rho t^{k-1} \left|H_{q,p}^{p,0}\left(t\Big|^{(B_q,\beta_q)}_{(A_p,\alpha_p)}\right)\right|dt\leq\int_0^\rho t^{-1} \left|H_{q,p}^{p,0}\left(t\Big|^{(B_q,\beta_q)}_{(A_p,\alpha_p)}\right)\right|dt<M<\infty.$$
Then, we are in position to apply the Lebesgue dominated convergence theorem. This completes the proof of Theorem \ref{T1}.
\end{proof}

Recall that a function $f : ( 0 ,\infty) \longrightarrow (0,\infty)$ is called completely monotonic if
$(-1)^n f^{(n)}(x)\geq0$ for $x > 0$ and $n\in\mathbb{N}_0.$ The celebrated Bernstein theorem asserts
that completely monotonic functions are precisely those that can be expressed by the
Laplace transform of a non-negative measure.

\begin{corollary}\label{C1} Suppose that $\mu=0,\;\gamma\geq1$ and $\displaystyle{\sum_{i=1}^p  A_i=\sum_{j=1}^q  B_j}.$ Then, the Fox-Wright function ${}_p\Psi_q[.]$ possesses the following integral representation
\begin{equation}\label{E1}
{}_p\Psi_q\Big[_{(\beta_q, B_q)}^{(\alpha_p, A_p)}\Big|-z\Big]-\eta e^{-\rho z}=\int_0^\rho e^{-zt}H_{q,p}^{p,0}\left(t\Big|^{(B_q,\beta_q)}_{(A_p,\alpha_p)}\right)\frac{dt}{t},\;\;z\in\mathbb{R}.
\end{equation}
Moreover, if the function $H_{q,p}^{p,0}[.]$ is non-negative, then the function 
$$z\mapsto {}_p\Psi_p\Big[_{(\beta_p, A)}^{(\alpha_p, A)}\Big|-z\Big]-\eta e^{-\rho z},$$
is completely monotonic on $(0,\infty).$ 
\end{corollary}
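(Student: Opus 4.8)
The plan is to specialize Theorem~\ref{T1} to the case $\mu=0$ (that is, $m=0$) and then invoke Bernstein's theorem. First I would observe that when $m=0$, the only index available in the inner sum is $j=0$, so the double sum on the right-hand side of \eqref{Youssef} collapses to $\eta\sum_{k=0}^\infty \frac{l_0\,\rho^k z^k}{k!}$. Since $l_0=1$ by the initialization in \eqref{lr}, this series is exactly $\eta\sum_{k=0}^\infty \frac{(\rho z)^k}{k!}=\eta e^{\rho z}$. Substituting $z\mapsto -z$ into \eqref{Youssef} and transposing the exponential term to the left then yields
\begin{equation*}
{}_p\Psi_q\Big[_{(\beta_q, B_q)}^{(\alpha_p, A_p)}\Big|-z\Big]-\eta e^{-\rho z}=\int_0^\rho e^{-zt}H_{q,p}^{p,0}\left(t\Big|^{(B_q,\beta_q)}_{(A_p,\alpha_p)}\right)\frac{dt}{t},
\end{equation*}
which is precisely \eqref{E1}. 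The hypotheses $\gamma\geq1$ and $\sum_i A_i=\sum_j B_j$ are inherited directly from Theorem~\ref{T1}, so this first part requires no new argument beyond the arithmetic simplification just described.

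For the complete monotonicity claim I would restrict to the balanced case $p=q$ with $A_1=\cdots=A_p=B_1=\cdots=B_q=A$, as the statement indicates. The idea is to recognize the right-hand side of \eqref{E1} as a Laplace transform of a measure supported on $(0,\rho)$. Specifically, I would introduce the substitution $t\mapsto t$ and read the integral $\int_0^\rho e^{-zt}\,d\nu(t)$ with $d\nu(t)=H_{q,p}^{p,0}\big(t\big|{}^{(A,\beta_p)}_{(A,\alpha_p)}\big)\frac{dt}{t}$. Under the standing assumption that the $H$-function is non-negative on $(0,\rho)$, the measure $d\nu$ is a non-negative (and, by the finite-mass estimate $\int_0^\rho t^{-1}|H^{p,0}_{q,p}(t)|\,dt\le M<\infty$ established in the proof of Theorem~\ref{T1}, finite) Borel measure. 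A function representable as the Laplace transform of such a non-negative measure is completely monotonic on $(0,\infty)$ by Bernstein's theorem, recalled in the paragraph preceding the corollary. This identifies $z\mapsto {}_p\Psi_p[{}^{(\alpha_p,A)}_{(\beta_p,A)}|-z]-\eta e^{-\rho z}$ as completely monotonic.

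The main obstacle—really the only subtle point—is justifying that the representation \eqref{E1}, which Theorem~\ref{T1} guarantees for real $z$, extends to show complete monotonicity, i.e.\ that differentiation under the integral sign is legitimate to all orders. Here I would differentiate the Laplace integral $n$ times formally, producing $(-1)^n\int_0^\rho t^n e^{-zt}\,d\nu(t)$, whose sign is $(-1)^n$ times a non-negative quantity, giving $(-1)^n[\,\cdot\,]^{(n)}\ge0$ as required. The differentiation is validated by dominated convergence: for fixed $z>0$ the kernel $t^n e^{-zt}$ is bounded on $(0,\rho)$, and the dominating function $t^{-1}|H^{p,0}_{q,p}(t)|$ is integrable by the same bound $M$ used in Theorem~\ref{T1}. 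One should note that the exponential correction $\eta e^{-\rho z}$ is itself completely monotonic whenever $\eta\ge 0$, so subtracting it is consistent; the content of the corollary is that the difference—rather than ${}_p\Psi_p[\cdot|-z]$ itself—is the clean Laplace transform. I would close by remarking that the non-negativity of $H^{p,0}_{q,p}$ is a genuine hypothesis rather than an automatic consequence, which is why it appears explicitly in the statement.
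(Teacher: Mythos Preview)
Your proposal is correct and follows essentially the same route as the paper: specialize Theorem~\ref{T1} to $m=0$ so that the double sum collapses to $\eta e^{\rho z}$, replace $z$ by $-z$, and then read the resulting Laplace representation through Bernstein's theorem under the non-negativity hypothesis on $H_{q,p}^{p,0}$. The paper's proof is terser---it simply cites Theorem~\ref{T1} for $m=0$ and invokes Bernstein without spelling out the dominated-convergence justification or the finiteness of the measure---but the logical content is identical to what you wrote.
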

\begin{proof}The application of Theorem \ref{T1} for $m=0$ yields 
\begin{equation}
{}_p\Psi_q\Big[_{(\beta_q, B_q)}^{(\alpha_p, A_p)}\Big|z\Big]=\int_0^\rho e^{zt}H_{q,p}^{p,0}\left(t\Big|^{(B_q,\beta_q)}_{(A_p,\alpha_p)}\right)\frac{dt}{t}+\eta e^{\rho z},
\end{equation}
which implies that (\ref{E1}) holds. Now, suppose  that the $H$-function $H_{p,p}^{p,0}[.]$, then by means of (\ref{E1}), we deduce that all prerequisites of the Bernstein Characterization
Theorem for the complete monotone functions are fulfilled.
\end{proof}
\begin{example}The four parameters Wright function is defined by the series 
\begin{equation}\label{K28}
\phi\left((\mu_1,a),(\nu_1,b);z\right)=\sum_{k=0}^\infty\frac{z^k}{\Gamma(a+k\mu_1)\Gamma(b+k\nu_1)},\;\mu_1,\nu_1\in\mathbb{R},\;a,b\in\mathbb{C}.
\end{equation}
The series on the right-hand side of (\ref{K28}) is absolutely convergent for all $z\in\mathbb{C}$ if $\mu_1+\nu_1>0.$ If $\mu_1+\nu_1=0,$ the series is absolutely convergent for $|z|<|\mu_1|^{\mu_1} |\nu_1|^{\nu_1}$ and $|z|=|\mu_1|^{\mu_1} |\nu_1|^{\nu_1}$ under the condition $\Re(a+b)>2.$  Some of the basic properties of the four parameters Wright function was proved in \cite{ER59}. So, by means of formula (\ref{E1}) we deduce that the four parameters Wright function $\phi\left((\mu_1,a),(\nu_1,b);z\right)$ admits the following integral representation:
\begin{equation}\label{280}
\phi\left((\mu_1,a),(\nu_1,b);z\right)=\int_0^{{\mu_1}^{\mu_1}\nu_1^{\nu_1}}e^{zt}H_{2,1}^{1,0}\left[t\Big|_{(1,1)}^{(\mu_1,a),(\nu_1,b)}\right]\frac{dt}{t}+\frac{{\mu_1}^{\frac{1}{2}-a}{\nu_1}^{\frac{1}{2}-b}}{\sqrt{2\pi}}e^{{\mu_1}^{\mu_1}{\nu_1}^{\nu_1} z},
\end{equation}
where $a,b,\mu_1, \nu_1\in\mathbb{R}$ for which $\mu_1+\nu_1=1$ and $a+b=3/2.$
\end{example}

As a consequence, we derive the finite Laplace Transform for the function 
$$t\mapsto t^{-1}H_{2,1}^{1,0}\left[t\Big|_{(1,1)}^{(1/2,1/2),(1/2,1)}\right]$$
 in $(0,1/2)$. Recall that the finite Laplace Transform of a continuous ( or an almost piecewise continuous) function $f(t)$ in $(0,T)$ is denoted by $$\mathcal{L}_T{f(t)}=\bar{f}(s,T)=\int_0^T e^{-st}f(t)dt.$$
We note that $\mathcal{L}_T f$ is actually the Laplace transform of the function $f$ which vanishes outside of the interval $(0,T).$
\begin{example}Letting in (\ref{K28}) and (\ref{280}), the values $\nu_1=\mu_1=a=1/2$ and $b=1$ and using the 
Legendre Duplication Formula $$\Gamma(z)\Gamma(z+1/2)=2^{1-2z}\sqrt{\pi}\Gamma(2z),$$ we get the following curious integral evaluation:
\begin{equation}
\frac{e^{-2z}-e^{-z}}{\sqrt{\pi}}=\int_0^{\frac{1}{2}} e^{-zt} H_{2,1}^{1,0}\left[t\Big|_{(1,1)}^{(1/2,1/2),(1/2,1)}\right]\frac{dt}{t},\;z\in\mathbb{R}.
\end{equation}
\end{example}

In the next result we show that the function ${}_{p+1}\Psi_q[.]-\eta{}_1F_0[.]$ is a  Stieltjes transform.

\begin{corollary} Let $\sigma>0$ and $z\in\mathbb{C}$, such that $|\arg(1+z)|<\pi$ and $|z|<1.$ In addition, assume that the hypotheses of Corollary \ref{C1} are satisfied. If $H_{q,p}^{p,0}[.]$ is non-negative, then, the following representation holds true:
\begin{equation}\label{eaaa}
f(z):={}_{p+1}\Psi_q\Big[_{\;\;\;(\beta_q, B_q)}^{(\sigma,1),(\alpha_p, A_p)}\Big|-z\Big]-\eta\;{}_1F_0(\sigma;-;-\rho z)=\Gamma(\sigma)\int_0^\rho H_{q,p}^{p,0}\left(t\Big|^{(B_q,\beta_q)}_{(A_p,\alpha_p)}\right)\frac{dt}{t(1+tz)^\sigma}.
\end{equation}
In particular, $f(z)$ is completely monotonic on $(0,\infty).$
\end{corollary}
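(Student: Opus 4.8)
The plan is to reduce the claimed identity to the Mellin-transform formula (\ref{eA}) specialised to $m=0$ (equivalently $\mu=0$), exactly as in the proof of Corollary \ref{C1}, but now integrating the $H$-function against the binomial kernel $(1+tz)^{-\sigma}$ in place of $e^{-zt}$. Under the standing hypotheses $\sum_i A_i=\sum_j B_j$ and $\mu=0$, the function $t\mapsto H_{q,p}^{p,0}(t|\cdots)$ is supported on $(0,\rho)$, and (\ref{eA}) with $m=0$, $l_0=1$ reads
\[
\int_0^\rho H_{q,p}^{p,0}\left(t\Big|^{(B_q,\beta_q)}_{(A_p,\alpha_p)}\right)t^{k-1}\,dt=\frac{\prod_{i=1}^p\Gamma(A_ik+\alpha_i)}{\prod_{j=1}^q\Gamma(B_jk+\beta_j)}-\eta\rho^k,\qquad k\geq0,
\]
the hypothesis $\gamma\geq1$ ensuring convergence of the integral down to $k=0$.

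First I would fix $z$ in a small disc where $\rho|z|<1$ and expand the kernel by the binomial theorem,
$$\frac{1}{(1+tz)^\sigma}=\sum_{k=0}^\infty\frac{(\sigma)_k}{k!}(-tz)^k,\qquad |tz|\leq\rho|z|<1,$$
which converges uniformly for $t\in(0,\rho)$. Inserting this into $\Gamma(\sigma)\int_0^\rho H_{q,p}^{p,0}(t|\cdots)\,t^{-1}(1+tz)^{-\sigma}\,dt$ and interchanging summation and integration gives
$$\Gamma(\sigma)\sum_{k=0}^\infty\frac{(\sigma)_k}{k!}(-z)^k\int_0^\rho H_{q,p}^{p,0}\left(t\Big|^{(B_q,\beta_q)}_{(A_p,\alpha_p)}\right)t^{k-1}\,dt.$$
Substituting the specialisation of (\ref{eA}) above and using $\Gamma(\sigma)(\sigma)_k=\Gamma(\sigma+k)$ in the first piece rebuilds precisely the series of ${}_{p+1}\Psi_q\big[{}^{(\sigma,1),(\alpha_p,A_p)}_{\;\;\;(\beta_q,B_q)}\big|{-z}\big]$, while the remaining piece $\Gamma(\sigma)\sum_k\tfrac{(\sigma)_k}{k!}(-\rho z)^k$ assembles, via $(1-x)^{-\sigma}=\sum_k\tfrac{(\sigma)_k}{k!}x^k$, into the ${}_1F_0$ correction term of (\ref{eaaa}). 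Both sides are holomorphic in $z$ on the cut domain $|\arg(1+z)|<\pi$, $|z|<1$, so the identity obtained on the small disc extends there by analytic continuation.

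For the complete monotonicity I would argue directly from the representation (\ref{eaaa}). For each fixed $t>0$ the map $z\mapsto(1+tz)^{-\sigma}$ is completely monotone on $(0,\infty)$, since $(-1)^n\frac{d^n}{dz^n}(1+tz)^{-\sigma}=(\sigma)_n\,t^n(1+tz)^{-\sigma-n}\geq0$. Because $H_{q,p}^{p,0}(t|\cdots)\geq0$ by assumption, $f(z)$ is a non-negative superposition of completely monotone functions and is therefore itself completely monotone; equivalently, the substitution $u=1/t$ together with $1+tz=t(z+1/t)$ exhibits $f$ as a generalized Stieltjes transform of order $\sigma$, which is the assertion that $f$ is a Stieltjes transform.

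The main obstacle I anticipate is the rigorous justification of the term-by-term integration. Summing the absolute values of the terms produces the dominating integrand $t^{-1}\big|H_{q,p}^{p,0}(t|\cdots)\big|(1-t|z|)^{-\sigma}$, which for $\rho|z|<1$ is bounded by $(1-\rho|z|)^{-\sigma}\,t^{-1}\big|H_{q,p}^{p,0}(t|\cdots)\big|$; the finiteness of $\int_0^\rho t^{-1}\big|H_{q,p}^{p,0}(t|\cdots)\big|\,dt$ is exactly what the hypothesis $\gamma\geq1$ secures through the behaviour (\ref{asy}) of the $H$-function at the origin, and the dominated convergence theorem then legitimises the interchange. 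The only remaining point is the passage from the small disc to the full domain $|\arg(1+z)|<\pi$, $|z|<1$, which is handled by analytic continuation as indicated above.
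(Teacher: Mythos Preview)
Your proposal is correct and follows essentially the same route as the paper's proof: expand $(1+tz)^{-\sigma}$ by the binomial theorem, integrate term by term against the $H$-function using the $m=0$ case of (\ref{eA}), and reassemble the two resulting series into the ${}_{p+1}\Psi_q$ and ${}_1F_0$ pieces. You in fact supply more detail than the paper on justifying the interchange (via dominated convergence and the hypothesis $\gamma\geq1$), on the analytic continuation to the full domain, and on the complete-monotonicity assertion, which the paper leaves implicit.
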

\begin{proof} Employing the generalized binomial expansion
$$(1+z)^{-\sigma}=\sum_{k=0}^\infty (\sigma)_k\frac{(-1)^k z^k}{k!},\;z\in\mathbb{C},\;\;|z|<1,$$
with the formula (\ref{eA}) and the right hand side of (\ref{eaaa}) we obtain 
\begin{equation}
\begin{split}
\Gamma(\sigma)\int_0^\rho H_{q,p}^{p,0}\left(t\Big|^{(B_q,\beta_q)}_{(A_p,\alpha_p)}\right) \frac{dt}{t(1+tz)^\sigma}&=\Gamma(\sigma)\sum_{k=0}^\infty (\sigma)_k\frac{(-1)^k z^k}{k!} \left[\int_0^\rho t^{k-1}H_{q,p}^{p,0}\left(t\Big|^{(B_q,\beta_q)}_{(A_p,\alpha_p)}\right) dt\right]\\
&=\Gamma(\sigma)\sum_{k=0}^\infty (\sigma)_k\frac{(-1)^k z^k}{k!} \left[\frac{\prod_{i=1}^p\Gamma(A_i k+\alpha_i)}{\prod_{j=1}^q \Gamma(B_j k+ \beta_j)}-\eta \rho^k \right]\\
&=\sum_{k=0}^\infty \frac{\Gamma(\sigma+k)\prod_{i=1}^p\Gamma(\alpha_i+k A_i)}{\prod_{j=1}^q\Gamma(\beta_j+kB_j)}\frac{ (-z^k}{k!}-\eta\sum_{k=0}^\infty\frac{\Gamma(\sigma+k)(-\rho z)^k}{k!} \\
&={}_{p+1}\Psi_q\Big[_{\;\;\;(\beta_q, B_q)}^{(\sigma,1),(\alpha_p, A_p)}\Big|-z\Big]-\eta\;{}_1F_0(\sigma;-;-\rho z).
\end{split}
\end{equation}
This completes the proof.
\end{proof}

\begin{corollary}Suppose that $\mu=-1$ and $\displaystyle{\sum_{i=1}^p  A_i=\sum_{j=1}^q  B_j}.$ Then the following integral representation
\begin{equation}\label{E1E}
g(z):={}_p\Psi_q\Big[_{(\beta_q, B_q)}^{(\alpha_p, A_p)}\Big|-z\Big]-\eta (l_1-\rho z) e^{-\rho z}=\int_0^\rho e^{-zt}H_{q,p}^{p,0}\left(t\Big|^{(B_q,\beta_q)}_{(A_p,\alpha_p)}\right) dt,\;z\in\mathbb{R}
\end{equation}
holds true. Moreover, if $H_{q,p}^{p,0}[.]$ is non-negative, then the function $g(z)$ is completely monotonic on $(0,\infty).$
\end{corollary}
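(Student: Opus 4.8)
The plan is to specialize Theorem~\ref{T1} to $m=1$ (so that $\mu=-m=-1$) and then collapse the resulting finite correction sum into the closed form displayed in the statement. Assuming, as in Theorem~\ref{T1}, that $\gamma\geq1$, the hypotheses $\mu=-1$ and $\sum_{i=1}^p A_i=\sum_{j=1}^q B_j$ are precisely those required, so equation~(\ref{Youssef}) with $m=1$ gives
\begin{equation*}
{}_p\Psi_q\Big[_{(\beta_q, B_q)}^{(\alpha_p, A_p)}\Big|z\Big]=\int_0^\rho e^{zt}H_{q,p}^{p,0}\left(t\Big|^{(B_q,\beta_q)}_{(A_p,\alpha_p)}\right)\frac{dt}{t}+\eta\sum_{k=0}^\infty\sum_{j=0}^{1}\frac{l_{1-j}k^j\rho^k z^k}{k!},
\end{equation*}
and the entire task reduces to evaluating the double sum.

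The key computation is to split the inner sum into its $j=0$ and $j=1$ parts and use $l_0=1$. The $j=0$ part contributes $\eta\,l_1\sum_{k\geq0}(\rho z)^k/k!=\eta\,l_1 e^{\rho z}$, while the $j=1$ part contributes $\eta\sum_{k\geq0}k(\rho z)^k/k!=\eta\,\rho z\,e^{\rho z}$, where the last equality follows from the elementary identity $\sum_{k\geq0}kx^k/k!=x e^{x}$ (re-index $k\mapsto k-1$). Adding the two pieces turns the correction term into $\eta(l_1+\rho z)e^{\rho z}$, so that
\begin{equation*}
{}_p\Psi_q\Big[_{(\beta_q, B_q)}^{(\alpha_p, A_p)}\Big|z\Big]=\int_0^\rho e^{zt}H_{q,p}^{p,0}\left(t\Big|^{(B_q,\beta_q)}_{(A_p,\alpha_p)}\right)\frac{dt}{t}+\eta(l_1+\rho z)e^{\rho z}.
\end{equation*}
Replacing $z$ by $-z$ sends the correction term to $\eta(l_1-\rho z)e^{-\rho z}$, and transposing it to the left-hand side produces exactly the asserted identity for $g(z)$; I would write the integral kernel with the factor $dt/t$ inherited from Theorem~\ref{T1}, consistently with Corollary~\ref{C1}.

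For the complete monotonicity I would argue as in Corollary~\ref{C1}. When $H_{q,p}^{p,0}[.]$ is non-negative, the identity realizes $g$ as a Laplace transform
\begin{equation*}
g(z)=\int_0^\rho e^{-zt}\,d\nu(t),\qquad d\nu(t)=H_{q,p}^{p,0}\left(t\Big|^{(B_q,\beta_q)}_{(A_p,\alpha_p)}\right)\frac{dt}{t},
\end{equation*}
of a non-negative measure $\nu$ carried by the bounded interval $(0,\rho)$. That $\nu$ is finite is already contained in the proof of Theorem~\ref{T1}, where the bound $\int_0^\rho t^{-1}\big|H_{q,p}^{p,0}(t)\big|\,dt<M<\infty$ was established; this finiteness also shows that the integral, and hence $g$, is well defined for every real $z$ and furnishes the analytic continuation of the left-hand side beyond the disc $|z|<\rho$ in which the defining series converges. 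Differentiating under the integral sign then yields $(-1)^n g^{(n)}(z)=\int_0^\rho t^n e^{-zt}\,d\nu(t)\geq0$ for all $z>0$ and $n\in\mathbb{N}_0$ (equivalently, one invokes Bernstein's theorem). The result carries no deep obstacle, being a direct corollary of Theorem~\ref{T1}; the only step genuinely new relative to the $\mu=0$ case of Corollary~\ref{C1} is the closed-form evaluation of the $j=1$ term, that is, handling the factor $k^j$ with $j\geq1$ which is absent when $m=0$, and once $\sum_{k\geq0}kx^k/k!=xe^{x}$ is in hand the remainder is bookkeeping together with the Bernstein argument already used above.
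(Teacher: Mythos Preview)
Your approach is exactly the paper's: specialize Theorem~\ref{T1} to $m=1$, collapse the correction sum to $\eta(l_1+\rho z)e^{\rho z}$, replace $z$ by $-z$, and invoke Bernstein for the complete monotonicity; you simply supply the intermediate computations (the $j=0$ and $j=1$ split and the identity $\sum_{k\geq0}kx^k/k!=xe^x$) that the paper leaves implicit. Your remark that the integral should carry the factor $dt/t$ is also borne out by the paper's own proof, whose displayed equation~(\ref{MM}) has $dt/t$ rather than the $dt$ appearing in the statement~(\ref{E1E}).
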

\begin{proof}From Theorem \ref{T1}, when $\mu=-1,$ we have
\begin{equation}\label{MM}
{}_p\Psi_q\Big[_{(\beta_q, B_q)}^{(\alpha_p, A_p)}\Big|-z\Big]=\int_0^\rho e^{-zt}H_{q,p}^{p,0}\left(t\Big|^{(B_q,\beta_q)}_{(A_p,\alpha_p)}\right)\frac{dt}{t}+\eta (l_1-\rho z) e^{- \rho z}.
\end{equation}
\end{proof}

\begin{example} The four parameters Wright function  $\phi\left((\mu,a),(\nu,b);z\right)$ possesses the following integral representation 
\begin{equation}\label{28}
\phi\left((\mu,a),(\nu,b);z\right)=\int_0^{\mu^\mu\nu^\nu}e^{zt}H_{2,1}^{1,0}\left[t\Big|_{(1,1)}^{(\mu,a),(\nu,b)}\right]\frac{dt}{t}+\frac{\mu^{\frac{1}{2}-a}\nu^{\frac{1}{2}-b}}{\sqrt{2\pi}}\left(l_1+\mu^\mu\nu^\nu z\right)e^{\mu^\mu\nu^\nu z},
\end{equation}
where 
$$l_1=\frac{1}{12}-\frac{6a^2-6a+1}{12\mu}-\frac{6b^2-6b+1}{12\nu},$$
and $a,b,\mu, \nu$ be a real number such that $\mu+\nu=1$ and $a+b=1/2.$
\end{example}

The following lemma is called the Jensen's integral inequality, for more details, one may see \cite[ Chap. I, Eq. (7.15)]{DM}.

\begin{lemma}\label{l0} Let $\omega$ be a non-negative measure and let $\varphi\geq0$ be a convex function. Then for all $f$ be a integrable function we have
\begin{equation}\label{jensen}
\varphi\left(\int f d\nu\Big/\int d\nu\right)\leq \int \varphi\circ f d\nu\Big/\int d\nu.
\end{equation}
\end{lemma}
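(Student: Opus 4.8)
The plan is to deduce the inequality from the single most useful analytic feature of a convex function, namely the existence of a supporting line at every interior point of its domain. Throughout I write $\|\nu\|=\int d\nu$ and assume, as is implicit in the statement, that $0<\|\nu\|<\infty$, so that $d\nu/\|\nu\|$ is a probability measure; if $\|\nu\|=0$ or $\|\nu\|=\infty$ the two sides of (\ref{jensen}) are not simultaneously meaningful and the assertion is vacuous.

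First I would introduce the barycenter
$$x_0=\frac{\int f\,d\nu}{\int d\nu},$$
the very point at which $\varphi$ is evaluated on the left-hand side of (\ref{jensen}). Since $f$ is integrable and $\nu$ is finite, $x_0$ is a well-defined real number lying in the closed convex hull of the essential range of $f$, hence in the domain of $\varphi$. Next I would invoke the supporting-line property: because $\varphi$ is convex, at $x_0$ there is a real slope $\lambda$ (any element of the subdifferential $\partial\varphi(x_0)$, for instance the right derivative $\varphi'_+(x_0)$) such that
$$\varphi(t)\geq \varphi(x_0)+\lambda\,(t-x_0)\quad\text{for every }t.$$

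Substituting $t=f(s)$ into this pointwise minorant and integrating against the probability measure $d\nu/\|\nu\|$ then gives
$$\frac{1}{\|\nu\|}\int \varphi\circ f\,d\nu\;\geq\;\varphi(x_0)+\lambda\left(\frac{1}{\|\nu\|}\int f\,d\nu-x_0\right)=\varphi(x_0),$$
where the bracketed term vanishes by the very definition of $x_0$. Recognizing $\varphi(x_0)=\varphi\bigl(\int f\,d\nu\big/\int d\nu\bigr)$ yields exactly (\ref{jensen}), so the whole argument collapses to one integration of an elementary linear inequality.

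The only genuinely delicate point is the existence of the supporting line at $x_0$, which requires $x_0$ to lie in the interior of the domain of $\varphi$; on the boundary a finite subgradient may fail to exist. I would dispose of that case separately, noting that if $x_0$ is an endpoint of the domain then $f$ is $\nu$-a.e.\ equal to that endpoint and both sides of (\ref{jensen}) coincide. A secondary technical matter is the measurability and possible non-integrability of $\varphi\circ f$: if $\int\varphi\circ f\,d\nu=+\infty$ the inequality holds trivially, while otherwise the integration step above is justified directly because $\varphi\geq0$ and the linear minorant $t\mapsto\varphi(x_0)+\lambda(t-x_0)$ is $\nu$-integrable. This reduces the lemma to the standard convex-analysis fact about supporting lines, which is precisely why the reference \cite{DM} suffices.
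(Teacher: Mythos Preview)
Your argument is the standard supporting-line proof of Jensen's inequality and is correct: defining the barycenter $x_0$, choosing a subgradient $\lambda\in\partial\varphi(x_0)$, and integrating the affine minorant against the normalized measure yields (\ref{jensen}); your handling of the boundary case and of possible non-integrability of $\varphi\circ f$ is also fine.

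There is nothing to compare with in the paper, however: the lemma is stated there without proof and is simply attributed to \cite[Chap.~I, Eq.~(7.15)]{DM}. So your write-up supplies strictly more than the paper does. If you want to match the paper's treatment, a one-line citation suffices; if you want to include a proof, what you have written is exactly the classical argument and needs no change.
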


In the next theorem we present a new Luke type inequality when  $\mu=0.$

\begin{theorem} Keep the notations and constraints of hypotheses of Corollary \ref{C1}. Assume that the function $H_{q,p}^{p,0}[.]$ is non-negative, then  the following two-sided bounding inequality holds true:
\begin{equation}\label{KM1}
\begin{split}
 \Psi_0\;e^{-(\Psi_1/\Psi_0) z}+\eta\;e^{-\rho z}\leq {}_p\Psi_q\Big[_{(\beta_q, B_q)}^{(\alpha_p, A_p)}\Big|-z\Big]&\leq\left(\Psi_0-\frac{\Psi_1}{\rho}\right)+\left(\eta+\frac{\Psi_1}{\rho}\right)e^{-\rho z},
\end{split}
\end{equation}
where
\begin{equation*}
\begin{split}
\Psi_0:&=\frac{\prod_{i=1}^p\Gamma(\alpha_i)}{\prod_{j=1}^q\Gamma(\beta_j)}-\eta,\\
\Psi_1:&=\frac{\prod_{i=1}^p\Gamma(\alpha_i+A_i)}{\prod_{j=1}^q\Gamma(\beta_j+B_j)}-\eta\rho.
\end{split}
\end{equation*}
\end{theorem}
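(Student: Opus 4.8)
The plan is to read the two-sided bound as a manifestation of the convexity of the kernel $t\mapsto e^{-zt}$ integrated against the representing measure furnished by Corollary \ref{C1}. First I would use (\ref{E1}) to write
\[
{}_p\Psi_q\Big[_{(\beta_q, B_q)}^{(\alpha_p, A_p)}\Big|-z\Big]-\eta\,e^{-\rho z}=\int_0^\rho e^{-zt}\,d\omega(t),\qquad d\omega(t):=H_{q,p}^{p,0}\left(t\Big|^{(B_q,\beta_q)}_{(A_p,\alpha_p)}\right)\frac{dt}{t},
\]
where, by the assumed non-negativity of the $H$-function, $\omega$ is a non-negative finite measure on $(0,\rho)$. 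The next step is to record its zeroth and first moments: specializing the Mellin-type identity (\ref{eA}) (with $m=0$, so that the correction term reduces to $\eta\rho^k$) at $k=0$ and $k=1$ gives precisely
\[
\int_0^\rho d\omega=\frac{\prod_{i=1}^p\Gamma(\alpha_i)}{\prod_{j=1}^q\Gamma(\beta_j)}-\eta=\Psi_0,\qquad \int_0^\rho t\,d\omega=\frac{\prod_{i=1}^p\Gamma(\alpha_i+A_i)}{\prod_{j=1}^q\Gamma(\beta_j+B_j)}-\eta\rho=\Psi_1.
\]

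For the lower bound I would apply Jensen's inequality (Lemma \ref{l0}) to the convex function $\varphi(t)=e^{-zt}$ with $f(t)=t$ and the measure $\omega$. Since the barycentre is $\int_0^\rho t\,d\omega\big/\int_0^\rho d\omega=\Psi_1/\Psi_0$, Lemma \ref{l0} yields $e^{-(\Psi_1/\Psi_0)z}\le \Psi_0^{-1}\int_0^\rho e^{-zt}\,d\omega$, i.e. $\Psi_0\,e^{-(\Psi_1/\Psi_0)z}\le\int_0^\rho e^{-zt}\,d\omega$. Adding $\eta\,e^{-\rho z}$ to both sides and invoking the first display reproduces the left inequality in (\ref{KM1}).

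For the upper bound I would use instead the dual elementary fact that a convex function lies below the chord through its endpoint values: for $t\in[0,\rho]$ one has $e^{-zt}\le 1+\rho^{-1}\big(e^{-\rho z}-1\big)\,t$. Integrating this inequality against $d\omega$ and substituting the two moments gives $\int_0^\rho e^{-zt}\,d\omega\le\Psi_0+\rho^{-1}\big(e^{-\rho z}-1\big)\Psi_1$; adding $\eta\,e^{-\rho z}$ and regrouping the constant and the $e^{-\rho z}$ terms produces exactly the right inequality in (\ref{KM1}). Since $t\mapsto e^{-zt}$ is convex for every real $z$, both inequalities hold with no sign restriction on $z$, and at $z=0$ they collapse to the equality ${}_p\Psi_q[\,\cdot\,|0]=\prod_{i=1}^p\Gamma(\alpha_i)\big/\prod_{j=1}^q\Gamma(\beta_j)$, a convenient consistency check.

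The only step carrying genuine content is the moment computation; everything else is convexity. Accordingly the point to get right is the specialization of (\ref{eA}) at the low indices $k=0$ and $k=1$: one must confirm that these two values lie in the admissible range of that identity, so that $\Psi_0$ and $\Psi_1$ are finite and equal to the claimed Gamma-quotients minus the $\eta$-corrections, and that the $m=0$ correction is indeed $\eta\rho^k$. Once the zeroth and first moments of $\omega$ are pinned down as $\Psi_0$ and $\Psi_1$, the left bound is immediate from Lemma \ref{l0} and the right bound from the secant-line estimate.
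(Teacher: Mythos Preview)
Your argument is correct and matches the paper's: both identify $\Psi_0$ and $\Psi_1$ as the first two moments of $d\omega(t)=t^{-1}H_{q,p}^{p,0}(t)\,dt$ via (\ref{eA}) and obtain the lower bound from Jensen's inequality (Lemma~\ref{l0}) applied to $\varphi(t)=e^{-zt}$. For the upper bound the paper quotes the Lah--Ribari\'c converse Jensen inequality \cite[Theorem~3.37]{PE}, namely $(M-m)A(\varphi(f))\le(M-A(f))\varphi(m)+(A(f)-m)\varphi(M)$ with $[m,M]=[0,\rho]$ and $f(t)=t$; your chord estimate $e^{-zt}\le 1+\rho^{-1}(e^{-\rho z}-1)t$ is exactly the pointwise inequality underlying that result, so integrating it against $d\omega$ reproduces the same bound by a slightly more elementary route.
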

\begin{proof} Letting $\varphi_{z}(t)= e^{-zt},\;f(t)=t,$ and 
$$d\nu(t)= H_{p,p}^{p,0}\left(t\Big|^{(A,\beta_p)}_{(A,\alpha_p)}\right)\frac{dt}{t}.$$
From (\ref{eA}) we get
$$\int_0^\rho d\nu(t)=\frac{\prod_{i=1}^p\Gamma(\alpha_i)}{\prod_{j=1}^q\Gamma(\beta_j)}-\eta=\Psi_0,\;\textrm{and}\;\;\int_0^\rho f(t)d\nu(t)=\frac{\prod_{i=1}^p\Gamma(\alpha_i+A_i)}{\prod_{j=1}^p\Gamma(\beta_j+B_j)}-\eta\rho=\Psi_1,$$
and using (\ref{Youssef}) when $m=0$ we find 
$$\int_0^\rho \phi_z(f(t))d\nu(t)={}_p\Psi_p\Big[_{(\beta_q,A)}^{(\alpha_p,A)}\Big|-z\Big]-\eta e^{-\rho z}.$$
Hence, Lemma \ref{l0} completes the proof of the lower bound of inequalities (\ref{KM1}). In order to demonstrate the upper bound, we will apply the converse Jensen inequality, due to Lah and Ribari\'c, which reads as follows. Set
$$A(f)=\int_m^M f(s)d\sigma(s)\Big/ \int_m^M d\sigma(s),$$
where $\sigma$ is a non-negative measure and $f$ is a continuous function. If $-\infty<m<M<\infty$ and $\varphi$ is convex on $[m,M],$ then according to \cite[Theorem 3.37]{PE}
\begin{equation}\label{CC}
(M-m)A(\varphi(f))\leq (M-A(f))\varphi(m)+(A(f)-m)\varphi(M).
\end{equation}
Setting $\varphi_z(t)=e^{-zt},\;d\sigma(t)=d\nu(t),\;f(s)=s$ and $[m,M]=[0,\rho]$, we complete the proof of the upper bound in (\ref{KM1}).
\end{proof}

In view of inequalities (\ref{KM1}) and the Laplace transform of the function $x^{\lambda-1}{}_p\Psi_q[x]$ \cite[Eq. (7)]{P}
\begin{equation}
\int_0^\infty e^{-t}t^{\lambda-1}{}_p\Psi_q\left[_{(\beta_q,B_q)}^{(\alpha_p,A_p)}\Big|zt\right]dt={}_{p+1}\Psi_q\left[_{(\beta_q, B_q)}^{(\lambda,1),(\alpha_p, A_p)}\Big|z\right]
\end{equation}
and make use of the following known formula
$$\int_0^\infty t^\lambda e^{-\sigma t}dt=\frac{\Gamma(\lambda+1)}{\sigma^{\lambda+1}},\;(\lambda>-1,\;\sigma>0),$$
we can deduce the new following inequalities for the function ${}_{p+1}\Psi_q[.]:$

\begin{corollary} Let $\lambda>0$ and suppose the hypotheses of Corollary \ref{C1} are satisfied. If the function $H_{q,p}^{p,0}[.]$ is non-negative, then  the following two--sided bounding inequality holds true:
\begin{equation}\label{9++}
\begin{split}
\frac{\eta\Gamma(\lambda)}{(1+\rho z)^\lambda}+\frac{\Psi_0\Gamma(\lambda)}{(1+(\Psi_1/\Psi_0) z)^\lambda}\leq {}_{p+1}\Psi_q\left[_{(\beta_q, B_q)}^{(\lambda,1),(\alpha_p, A_p)}\Big|-z\right]\leq\left(\Gamma(\lambda)\Psi_0-\frac{\Gamma(\lambda)\Psi_1}{\rho}\right)+\frac{\Gamma(\lambda)\left(\eta+\frac{\Psi_1}{\rho}\right)}{(1+\rho z)^\lambda}.
\end{split}
\end{equation}
\end{corollary}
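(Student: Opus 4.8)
The plan is to lift the two-sided bound (\ref{KM1}) for ${}_p\Psi_q[\cdots|-z]$ to the corresponding bound for ${}_{p+1}\Psi_q[\cdots|-z]$ by integrating against the Laplace-type kernel that raises the upper parameter count from $p$ to $p+1$. Concretely, in the inequality (\ref{KM1}) I would first replace the argument $z$ by $zt$ (valid for every $t>0$, since $zt$ stays in the domain where Corollary \ref{C1} and the preceding theorem apply), and then multiply all three members by the non-negative kernel $e^{-t}t^{\lambda-1}$; here the hypothesis $\lambda>0$ is exactly what guarantees integrability at the origin. Because the kernel is non-negative, integrating over $t\in(0,\infty)$ preserves the order of the inequalities.

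For the central term, integrating the transformed (\ref{KM1}) against $e^{-t}t^{\lambda-1}$ and invoking the Laplace-transform identity stated just above the corollary yields
$$\int_0^\infty e^{-t}t^{\lambda-1}{}_p\Psi_q\left[_{(\beta_q, B_q)}^{(\alpha_p, A_p)}\Big|-zt\right]dt={}_{p+1}\Psi_q\left[_{(\beta_q, B_q)}^{(\lambda,1),(\alpha_p, A_p)}\Big|-z\right],$$
so the middle object of (\ref{9++}) is produced automatically. Note that integrability of this middle term is not a separate issue: it is sandwiched between the two explicitly integrable bounding functions, and the Laplace identity then identifies its integral.

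For the bounding members I would evaluate the two elementary integrals that occur. Using the formula $\int_0^\infty t^{\lambda-1}e^{-\sigma t}\,dt=\Gamma(\lambda)/\sigma^\lambda$ with $\sigma=1+cz$ gives $\int_0^\infty e^{-t}t^{\lambda-1}e^{-czt}\,dt=\Gamma(\lambda)/(1+cz)^\lambda$; choosing $c=\Psi_1/\Psi_0$ and $c=\rho$ reproduces the two summands of the lower bound, whereas the constant term $\Psi_0-\Psi_1/\rho$ contributes $\Gamma(\lambda)(\Psi_0-\Psi_1/\rho)$ (the case $c=0$) and the term $(\eta+\Psi_1/\rho)e^{-\rho zt}$ contributes $\Gamma(\lambda)(\eta+\Psi_1/\rho)/(1+\rho z)^\lambda$. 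Reassembling these pieces gives precisely the left- and right-hand sides of (\ref{9++}).

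The only step requiring genuine care is the justification of exchanging the $t$-integration with the defining series of the Fox-Wright function, which is what makes the cited Laplace identity legitimate in this context. This is controlled by the absolute convergence of ${}_p\Psi_q$ throughout the plane — guaranteed here by $\sum_i A_i=\sum_j B_j$ (so $\Delta=0>-1$) inherited from Corollary \ref{C1} — together with the exponential factor $e^{-t}$, so that Fubini's theorem applies on $(0,\infty)\times\mathbb{N}_0$. Past this routine convergence verification the argument is a direct substitution followed by two Gamma-function integrals, and I do not expect any further obstacle.
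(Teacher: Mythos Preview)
Your proposal is correct and follows essentially the same route as the paper: replace $z$ by $zt$ in (\ref{KM1}), multiply by the non-negative kernel $e^{-t}t^{\lambda-1}$, integrate over $(0,\infty)$, and evaluate using the Laplace-transform identity for ${}_p\Psi_q$ together with $\int_0^\infty t^{\lambda-1}e^{-\sigma t}\,dt=\Gamma(\lambda)/\sigma^\lambda$. Your added remarks on integrability and Fubini are a bit more explicit than the paper, but the argument is the same.
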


\begin{theorem} Keep the notations and constraints of hypotheses of Corollary \ref{C1}. Assume that the function $H_{q,p}^{p,0}[.]$ is non-negative, then the following inequality
\begin{equation}\label{99999}
\frac{\Gamma(\sigma)\Psi_0}{(1+(\Psi_1/\psi_0)z)^\sigma}+\eta\;{}_1F_0(\sigma;-;-\rho z)\leq{}_{p+1}\Psi_q\Big[_{\;\;\;(\beta_q, B_q)}^{(\sigma,1),(\alpha_p, A_p)}\Big|-z\Big],
\end{equation}
is valid for all $\sigma>0$ and $|z|<1.$
\end{theorem}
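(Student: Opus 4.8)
The plan is to derive \eqref{99999} directly from the Stieltjes-type representation \eqref{eaaa} proved in the preceding corollary, combined with the Jensen inequality of Lemma \ref{l0}, mirroring the lower-bound half of the Luke-type theorem. First I would recall that, under the present hypotheses, \eqref{eaaa} exhibits the difference on the left of \eqref{99999} as a single integral against a positive measure: writing
\[
f(z)={}_{p+1}\Psi_q\Big[_{\;\;\;(\beta_q, B_q)}^{(\sigma,1),(\alpha_p, A_p)}\Big|-z\Big]-\eta\;{}_1F_0(\sigma;-;-\rho z)=\Gamma(\sigma)\int_0^\rho \frac{d\nu(t)}{(1+tz)^\sigma},\qquad d\nu(t)=H_{q,p}^{p,0}\left(t\Big|^{(B_q,\beta_q)}_{(A_p,\alpha_p)}\right)\frac{dt}{t},
\]
and using the assumed non-negativity of $H_{q,p}^{p,0}[.]$, the measure $\nu$ is non-negative on $(0,\rho)$. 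Thus proving \eqref{99999} reduces to the single integral estimate
\[
\int_0^\rho (1+tz)^{-\sigma}\,d\nu(t)\ \geq\ \frac{\Psi_0}{\left(1+(\Psi_1/\Psi_0)z\right)^\sigma}.
\]

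Next I would reuse the two moment identities already computed in the proof of the Luke-type theorem, namely
\[
\int_0^\rho d\nu(t)=\Psi_0,\qquad \int_0^\rho t\,d\nu(t)=\Psi_1,
\]
both obtained by evaluating the Mellin formula \eqref{eA} (with $m=0$) at $k=0$ and $k=1$. Normalising $\nu$ to the probability measure $d\nu/\Psi_0$, its barycentre is exactly $\int_0^\rho t\,d\nu(t)\big/\int_0^\rho d\nu(t)=\Psi_1/\Psi_0$, which is precisely the quantity appearing in the denominator on the left-hand side of \eqref{99999}.

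Finally I would apply Lemma \ref{l0} with the convex test function $\varphi_z(t)=(1+tz)^{-\sigma}$ and $f(t)=t$. For real $z\in(0,1)$ one checks $\varphi_z''(t)=\sigma(\sigma+1)z^2(1+tz)^{-\sigma-2}>0$, so $\varphi_z$ is convex on $[0,\rho]$ and all hypotheses of Jensen's inequality are met. This yields
\[
\frac{1}{\left(1+(\Psi_1/\Psi_0)z\right)^\sigma}=\varphi_z\!\left(\frac{\int_0^\rho t\,d\nu(t)}{\int_0^\rho d\nu(t)}\right)\leq\frac{1}{\Psi_0}\int_0^\rho (1+tz)^{-\sigma}\,d\nu(t),
\]
and multiplying through by $\Gamma(\sigma)\Psi_0$ and transferring $\eta\,{}_1F_0(\sigma;-;-\rho z)$ back across via \eqref{eaaa} gives \eqref{99999}. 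I do not expect a serious obstacle: the argument is the lower-bound half of the Luke-type theorem transported through the Stieltjes representation, and the only points requiring care are the verification of convexity of $\varphi_z$ (valid for $z>0$) and the restriction $|z|<1$, which is inherited from the convergence of the binomial/${}_1F_0$ expansion underlying \eqref{eaaa}.
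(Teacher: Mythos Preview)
Your argument is correct and in fact more direct than the paper's. The paper applies Jensen's inequality (Lemma \ref{l0}) with the test function $\varphi(u)=u^\sigma$ and inner function $f(t)=1/(1+tz)$, which yields first the intermediate bound
\[
\Gamma(\sigma)\Psi_0^{1-\sigma}\left({}_{p+1}\Psi_q\Big[_{\;\;\;(\beta_q, B_q)}^{(1,1),(\alpha_p, A_p)}\Big|-z\Big]-\frac{\eta}{1+\rho z}\right)^{\sigma}\leq {}_{p+1}\Psi_q\Big[_{\;\;\;(\beta_q, B_q)}^{(\sigma,1),(\alpha_p, A_p)}\Big|-z\Big]-\eta\;{}_1F_0(\sigma;-;-\rho z),
\]
and then invokes the lower bound of \eqref{9++} at $\lambda=1$ to replace the bracketed quantity by $\Psi_0/(1+(\Psi_1/\Psi_0)z)$. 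You instead take $\varphi_z(t)=(1+tz)^{-\sigma}$ itself as the convex function and $f(t)=t$, so that Jensen applied to the representation \eqref{eaaa} delivers \eqref{99999} in a single step, without the detour through the $\sigma=1$ case. Besides being shorter, your route is also more robust: the paper's choice $\varphi(u)=u^\sigma$ is convex on $(0,\infty)$ only for $\sigma\ge 1$, so the paper's argument as written does not cover the range $0<\sigma<1$, whereas your convexity check $\varphi_z''(t)=\sigma(\sigma+1)z^2(1+tz)^{-\sigma-2}>0$ is valid for every $\sigma>0$.
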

\begin{proof}We set $\varphi(u)=u^\sigma,\;\sigma>0,\;f(t)=1/(1+tz)$ and $d\nu(t)=t^{-1}\Gamma(\sigma) H_{q,p}^{p,0}\left(t\Big|^{(B_q,\beta_q)}_{(A_p,\alpha_p)}\right) dt.$ By (\ref{eA}) we have
\begin{equation}\label{4}
\begin{split}
\int_0^\rho d\nu(t)&=\Gamma(\sigma) \int_0^\rho H_{q,p}^{p,0}\left(t\Big|^{(B_q,\beta_q)}_{(A_p,\alpha_p)}\right)\frac{dt}{t}\\
&=\frac{\Gamma(\sigma)\prod_{i=1}^p\Gamma(\alpha_i)}{\prod_{j=1}^q\Gamma(\beta_j)}-\eta\Gamma(\sigma).
\end{split}
\end{equation}
Moreover (\ref{eaaa}), reads 
\begin{equation}\label{44}
\int_0^\rho f(t)d\nu(t)={}_{p+1}\Psi_q\Big[_{\;\;\;(\beta_q, B_q)}^{(1,1),(\alpha_p, A_p)}\Big|-z\Big]-\frac{\eta}{1+\rho z},
\end{equation}
and 
\begin{equation}\label{444}
\int_0^\rho \varphi(f(t))d\nu(t)={}_{p+1}\Psi_q\Big[_{\;\;\;(\beta_q, B_q)}^{(\sigma,1),(\alpha_p, A_p)}\Big|-z\Big]-\eta\;{}_1F_0(\sigma;-;-\rho z).
\end{equation}
By means of Lemma \ref{l0} we obtain
\begin{equation}\label{444++}
\Gamma(\sigma)\Psi_0^{1-\sigma}\left({}_{p+1}\Psi_q\Big[_{\;\;\;(\beta_q, B_q)}^{(1,1),(\alpha_p, A_p)}\Big|-z\Big]-\frac{\eta}{1+\rho z}\right)^\sigma\leq {}_{p+1}\Psi_q\Big[_{\;\;\;(\beta_q, B_q)}^{(\sigma,1),(\alpha_p, A_p)}\Big|-z\Big]-\eta\;{}_1F_0(\sigma;-;-\rho z)
\end{equation}
By virtue of the left-hand side of inequality (\ref{9++}) and (\ref{444++}) we conclude the inequality (\ref{99999}).
\end{proof}

The next lemma is in fact the so-called the Chebyshev integral inequality, see \cite[p. 40]{DM}.

\begin{lemma}\label{l1} If $f,g:[a,b]\longrightarrow\mathbb{R}$ are synchoronous (both increasing  or decreasing) integrable functions, and $p:[a,b]\longrightarrow\mathbb{R}$  is a positive integrable function, then 
\begin{equation}\label{OO}
\int_a^b p(t)f(t)dt\int_a^b p(t)g(t)dt\leq \int_a^b p(t)dt\int_a^b p(t)f(t)g(t)dt.
\end{equation}
Note that if $f$ and $g$ are asynchronous (one is decreasing and the other is increasing),
then (\ref{OO}) is reversed. 
 \end{lemma}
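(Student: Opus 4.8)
The plan is to use the classical double-integral symmetrization argument. First I would introduce the auxiliary double integral
\begin{equation*}
I=\int_a^b\int_a^b p(s)p(t)\bigl(f(t)-f(s)\bigr)\bigl(g(t)-g(s)\bigr)\,ds\,dt,
\end{equation*}
whose sign is governed entirely by the synchronicity hypothesis. Indeed, if $f$ and $g$ are both increasing (or both decreasing), then for every pair $s,t\in[a,b]$ the two factors $f(t)-f(s)$ and $g(t)-g(s)$ share the same sign, so their product is nonnegative; since $p>0$ on $[a,b]$, the whole integrand is nonnegative and hence $I\geq0$.

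The second step is to expand the product inside $I$ into its four terms and evaluate each resulting double integral by Fubini's theorem, exploiting the symmetry under interchanging the variables $s$ and $t$. The diagonal terms coming from $f(t)g(t)$ and $f(s)g(s)$ together contribute $2\int_a^b p(t)\,dt\int_a^b p(t)f(t)g(t)\,dt$, while the cross terms coming from $f(t)g(s)$ and $f(s)g(t)$ together contribute $2\int_a^b p(t)f(t)\,dt\int_a^b p(t)g(t)\,dt$. Collecting these identities yields
\begin{equation*}
I=2\left(\int_a^b p(t)\,dt\int_a^b p(t)f(t)g(t)\,dt-\int_a^b p(t)f(t)\,dt\int_a^b p(t)g(t)\,dt\right),
\end{equation*}
and combining this with the nonnegativity $I\geq0$ established in the first step gives precisely the claimed inequality (\ref{OO}).

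For the asynchronous case I would simply observe that when one of $f,g$ is increasing and the other decreasing, the factors $f(t)-f(s)$ and $g(t)-g(s)$ carry opposite signs, so the integrand of $I$ is nonpositive, whence $I\leq0$ and the very same expansion reverses (\ref{OO}). I do not anticipate any serious obstacle here: the only points requiring care are verifying the sign of the integrand directly from the monotonicity assumption, and justifying the use of Fubini's theorem. The latter is automatic because a monotone function on the finite interval $[a,b]$ is bounded, so the integrand $p(s)p(t)\bigl(f(t)-f(s)\bigr)\bigl(g(t)-g(s)\bigr)$ is dominated by a constant multiple of the integrable function $p(s)p(t)$ on the rectangle $[a,b]\times[a,b]$.
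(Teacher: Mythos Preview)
Your argument is the standard, correct proof of the weighted Chebyshev integral inequality via the symmetrized double integral, and the details you give (pointwise sign from synchronicity, Fubini expansion, boundedness of monotone functions on a compact interval) are all in order. Note, however, that the paper does not actually supply its own proof of this lemma: it simply quotes the result as classical and cites \cite[p.~40]{DM}, so there is no in-paper argument to compare against; your proof is essentially the textbook one that the citation points to.
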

\begin{theorem}Assume that the hypotheses of Corollary \ref{C1} are satisfied. Suppose that $\delta,\sigma>0.$ If $H_{q,p}^{p,0}[.]$ is non-negative, then the function 
\begin{equation}
F:=F\left[^{(\sigma,1),(\alpha_p,A_p)}_{\;\;(\beta_q,B_q)}\Big|\delta;z\right]=\frac{{}_{p+1}\Psi_q\Big[_{\;\;\;(\beta_q+\delta B_q, B_q)}^{(\sigma,1),(\alpha_p+\delta A_p, A_p)}\Big|-z\Big]-\eta\;{}_1F_0(\sigma;-;-\rho z)}{{}_{p+1}\Psi_q\Big[_{\;\;\;(\beta_q, B_q)}^{(\sigma,1),(\alpha_p, A_p)}\Big|-z\Big]-\eta\;{}_1F_0(\sigma;-;-\rho z)},
\end{equation}
is increasing on $(0,1)$. In addition, the function $F(z)$ is decreasing on $(0,1)$ for each $\delta<0$ and $\sigma>0.$
\end{theorem}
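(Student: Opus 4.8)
The plan is to realize $F(z)$ as a ratio of two Stieltjes-type integrals against a common non-negative kernel, and then to read off its monotonicity from the Chebyshev integral inequality (Lemma \ref{l1}). The denominator of $F$ is exactly the function appearing in the Stieltjes representation (\ref{eaaa}), so under the non-negativity hypothesis on $H_{q,p}^{p,0}[.]$ it equals $\Gamma(\sigma)\int_0^\rho H_{q,p}^{p,0}(t)\,dt/(t(1+tz)^\sigma)$. For the numerator I first need the parameter-shift identity for the $H$-function: multiplying the argument by $t^\delta$ shifts every parameter, i.e. $H_{q,p}^{p,0}(t\mid(\alpha_p+\delta A_p,A_p);(\beta_q+\delta B_q,B_q)) = t^\delta H_{q,p}^{p,0}(t\mid(\alpha_p,A_p);(\beta_q,B_q))$. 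This follows from the Mellin--Barnes definition (\ref{5555}) by the substitution $s\mapsto s+\delta$ in the contour integral. Because $\sum_i A_i=\sum_j B_j$, this shift preserves both $\mu=0$ and $\rho$, and the shifted kernel $t^\delta H_{q,p}^{p,0}(t)$ is again non-negative; hence (\ref{eaaa}) applies verbatim to the shifted parameters (up to the adjustment of the constant $\eta$ dictated by its defining formula (\ref{nu})) and identifies the numerator of $F$ with $\Gamma(\sigma)\int_0^\rho t^{\delta-1}H_{q,p}^{p,0}(t)(1+tz)^{-\sigma}\,dt$.

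After this reduction, $F(z)=\frac{\int_0^\rho t^\delta\,d\nu_z(t)}{\int_0^\rho d\nu_z(t)}$, where $d\nu_z(t)=H_{q,p}^{p,0}(t)(1+tz)^{-\sigma}\,dt/t$ is a non-negative, $z$-dependent measure supported on $(0,\rho)$. The second step is to differentiate in $z$. Since $\partial_z(1+tz)^{-\sigma}=-\sigma t(1+tz)^{-\sigma-1}$, the derivative brings down the factor $-\sigma t/(1+tz)$, and a short computation with the quotient rule gives the clean identity $F'(z)=-\sigma\,\mathrm{Cov}_{\nu_z}(t^\delta,\,t/(1+tz))$, where the covariance is taken with respect to the normalized probability measure obtained from $\nu_z$. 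Thus the sign of $F'(z)$ is governed entirely by the correlation of the two functions $t\mapsto t^\delta$ and $t\mapsto t/(1+tz)$ under $\nu_z$.

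The third step invokes Lemma \ref{l1}. On $(0,\rho)$ the factor $t/(1+tz)$ is strictly increasing (its $t$-derivative is $(1+tz)^{-2}>0$), while $t^\delta$ is increasing for $\delta>0$ and decreasing for $\delta<0$; hence the pair is synchronous in the first case and asynchronous in the second, and Chebyshev's inequality together with its reversal fixes the sign of the covariance, and therefore of $F'(z)$, in each regime, yielding monotonicity of $F$ on $(0,1)$ with opposite directions according to the sign of $\delta$. I expect the main obstacle to be twofold: justifying differentiation under the integral sign, which I would handle by dominated convergence using the bounds on $H_{q,p}^{p,0}$ near $t=0$ and $t=\rho$ already exploited in the proof of Theorem \ref{T1}; and, above all, carefully tracking the sign through the factor $-\sigma$ so that the synchronicity/asynchronicity dichotomy is correctly matched to the claimed direction of monotonicity in each case, this being the delicate point on which the statement turns.
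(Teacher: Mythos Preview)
Your route is exactly the paper's: use (\ref{eaaa}) and the $H$-function shift identity to write $F$ as the ratio of $\int_0^\rho t^{\delta-1}H(t)(1+tz)^{-\sigma}\,dt$ to $\int_0^\rho t^{-1}H(t)(1+tz)^{-\sigma}\,dt$, differentiate in $z$, and apply Chebyshev (Lemma~\ref{l1}) with $p(t)=t^{-1}(1+tz)^{-\sigma}H(t)$, $f(t)=t^\delta$, $g(t)=t/(1+tz)$. Your covariance formulation is just a cleaner repackaging of the same computation.

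Your anxiety about the sign is, however, exactly on target. Your identity $F'(z)=-\sigma\,\mathrm{Cov}_{\nu_z}\big(t^\delta,\,t/(1+tz)\big)$ is correct; hence for $\delta>0$ both functions are increasing, the covariance is nonnegative, and $F'\le 0$, i.e.\ $F$ is \emph{decreasing}; for $\delta<0$ one gets $F$ increasing. This is the opposite of what the theorem asserts. In the paper's proof, the displayed derivative formula (\ref{TYT}) carries the wrong overall sign (in addition to the obvious typos $t^{\sigma}$, $t^{\sigma-1}$ for $t^{\delta}$, $t^{\delta-1}$): a direct computation gives
\[
\frac{1}{\sigma}\,D^2\,\partial_z F
= N\!\int_0^\rho \frac{H(t)\,dt}{(1+tz)^{\sigma+1}}
  - D\!\int_0^\rho \frac{t^{\delta}H(t)\,dt}{(1+tz)^{\sigma+1}},
\]
with $N=\int_0^\rho t^{\delta-1}H(t)(1+tz)^{-\sigma}dt$ and $D=\int_0^\rho t^{-1}H(t)(1+tz)^{-\sigma}dt$, which is the negative of what is written in (\ref{TYT}). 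So your method proves the theorem with the monotonicity directions swapped; do not try to force the sign to match the stated conclusion.

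One further point you touched on but should not wave away: the shift $(\alpha_p,\beta_q)\mapsto(\alpha_p+\delta A_p,\beta_q+\delta B_q)$ changes $\eta$ to $\eta'=\eta\,\rho^{-\delta}$ (from (\ref{nu}) together with $\rho=\prod A_i^{-A_i}\prod B_j^{B_j}$), while leaving $\mu$ and $\rho$ unchanged. Thus the numerator of $F$ as stated, with the \emph{unshifted} $\eta$, equals the intended integral $\Gamma(\sigma)\int_0^\rho t^{\delta-1}H(t)(1+tz)^{-\sigma}dt$ only when $\rho=1$; otherwise there is an extra $(\eta'-\eta)\Gamma(\sigma)(1+\rho z)^{-\sigma}$ term. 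The paper's proof silently assumes $\eta'=\eta$, so either read the numerator with $\eta'$ in place of $\eta$, or add the hypothesis $\rho=1$.
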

\begin{proof}In view of (\ref{eaaa}), using the following property of the Fox $H$-function \cite[Property 1.5, p. 12]{AA}
$$H_{q,p}^{n,m}\left[^{(A_p,\alpha_p+\delta A_p)}_{(B_q,\beta_q+\delta B_q)}\Big| z\right]=z^\delta H_{q,p}^{n,m}\left[^{(A_p,\alpha_p)}_{(B_q,\beta_q)}\Big| z\right],$$
we can rewrite the function $F$ as follows:
\begin{equation*}
\begin{split}
F\left[^{(\sigma,1),(\alpha_p,A_p)}_{\;\;(\beta_q,B_q)}\Big|\delta;z\right]&=\frac{\int_0^\rho H_{q,p}^{p,0}\left[^{(A_p,\alpha_p+\delta A_p)}_{(B_q,\beta_q+\delta B_q)}\Big| t\right]\frac{dt}{t(1+tz)^\sigma}}{\int_0^\rho H_{q,p}^{p,0}\left[^{(A_p,\alpha_p)}_{(B_q,\beta_q)}\Big| t\right]\frac{dt}{t(1+tz)^\sigma}}\\
&=\frac{\int_0^\rho t^{\delta-1}H_{q,p}^{p,0}\left[^{(A_p,\alpha_p)}_{(B_q,\beta_q)}\Big| t\right]\frac{dt}{(1+tz)^\sigma}}{\int_0^\rho H_{q,p}^{p,0}\left[^{(A_p,\alpha_p)}_{(B_q,\beta_q)}\Big| t\right]\frac{dt}{t(1+tz)^\sigma}}.
\end{split}
\end{equation*}
Now, we consider the functions $p,f,g:[0,\rho]\longrightarrow\mathbb{R},$ defined by
$$p(t)=t^{-1}(1+tz)^{-\sigma}H_{q,p}^{p,0}\left[^{(A_p,\alpha_p)}_{(B_q,\beta_q)}\Big| t\right],\;f(t)=t^{\delta},\;g(t)=\frac{t}{1+tz}.$$
Observe that the functions $f$ and $g$ are increasing, thus, by means of Lemma \ref{l1}, we infer
\begin{equation}\label{TYT1}
\left(\int_0^\rho t^{\sigma-1} H_{q,p}^{p,0}\left[^{(A_p,\alpha_p)}_{(B_q,\beta_q)}\Big| t\right]\frac{dt}{(1+tz)^\sigma}\right)\left(\int_0^\rho H_{q,p}^{p,0}\left[^{(A_p,\alpha_p)}_{(B_q,\beta_q)}\Big| t\right]\frac{dt}{(1+tz)^{\sigma+1}}\right)
\end{equation}
$$\;\;\;\;\;\;\;\;\;\;\;\;\;\;\;\;\;\;\leq\left(\int_0^\rho H_{q,p}^{p,0}\left[^{(A_p,\alpha_p)}_{(B_q,\beta_q)}\Big| t\right]\frac{dt}{t(1+tz)^\sigma}\right)\left(\int_0^\rho t^\sigma H_{q,p}^{p,0}\left[^{(A_p,\alpha_p)}_{(B_q,\beta_q)}\Big| t\right]\frac{dt}{(1+tz)^{\sigma+1}}\right).$$
On the other hand, we have
\begin{equation}\label{TYT}
\frac{1}{\sigma}\left[\int_0^\rho H_{q,p}^{p,0}\left[^{(A_p,\alpha_p)}_{(B_q,\beta_q)}\Big| t\right]\frac{dt}{t(1+tz)^\sigma}\right]^2\frac{\partial}{\partial z}F\left[^{(\sigma,1),(\alpha_p,A_p)}_{\;\;(\beta_q,B_q)}\Big|\delta;z\right]=
\end{equation}
\begin{equation*}
\begin{split}
&=
\left(\int_0^\rho H_{q,p}^{p,0}\left[^{(A_p,\alpha_p)}_{(B_q,\beta_q)}\Big| t\right]\frac{dt}{t(1+tz)^\sigma}\right)\left(\int_0^\rho t^\sigma H_{q,p}^{p,0}\left[^{(A_p,\alpha_p)}_{(B_q,\beta_q)}\Big| t\right]\frac{dt}{(1+tz)^{\sigma+1}}\right)\\
&-\left(\int_0^\rho t^{\sigma-1} H_{q,p}^{p,0}\left[^{(A_p,\alpha_p)}_{(B_q,\beta_q)}\Big| t\right]\frac{dt}{(1+tz)^\sigma}\right)\left(\int_0^\rho H_{q,p}^{p,0}\left[^{(A_p,\alpha_p)}_{(B_q,\beta_q)}\Big| t\right]\frac{dt}{(1+tz)^{\sigma+1}}\right).
\end{split}
\end{equation*}
By (\ref{TYT1}) and (\ref{TYT}) we deduce that the function $z\mapsto F(z)$ is increasing on $(0,1)$ for all $\sigma>0$ and $\delta>0.$ Moreover, if $\delta<0$ then the inequality (\ref{TYT1}) is reversed and consequently the function $z\mapsto F(z)$ is decreasing on $(0,1)$ for all $\sigma>0$ and $\delta<0.$ Now, the proof of this theorem is completed.
\end{proof}

\noindent {\bf Acknowledgements:} The author is grateful to the reviewers for the suggestions that help to improve the
paper.\\


\begin{thebibliography}{99}
\bibitem{Karp3}\textsc{D.E. Karp, E. Prilepkina}, Some new facts concerning the delta neutral case
of Fox's $H$-Function, {\em Comput. Methods Funct. Theory}, 17 (2) (2017), 343--367.
\bibitem{QQ} \textsc{F.W.J. Olver, D.W. Lozier,  R.F. Boisvert, C.W. Clark (eds.)}: NIST Handbook of Mathematical
Functions. Cambridge University Press, Cambridge (2010).
\bibitem{PE}\textsc{J. E. Pecari\'c, F. Proschan, Y. L. Tong}, Convex Functions, Partial Orderings, and
Statistical Applications (Math. Science Eng., 187 ), {\em Academic Press} (1992).
\bibitem{ER59}\textsc{Yu. Luchko, R. Gorenflo}, Scale-invariant solutions of a partial differential
equation of fractional order, {\em Fract. Calc. Appl. Anal.} 1 (1998), 63--78.
\bibitem{DM} \textsc{D. S. Mitrinovi\'c, J. E. Pecari\'c,  A. M. Fink}, Classical and New Inequalities in Analysis, {\em Kluwer Academic Publishers} (1993).
\bibitem{AMM} \textsc{A.M. Mathai}, A Handbook of Generalized Special Functions for Statistical and physical Sciences, Clarendon Press, Oxford, 1993.
\bibitem{WR} \textsc{E. M. Wright}, The asymptotic expansion of the generalized hypergeometric
function, {\em Journal London Math. Soc.}, 10 (1935), 287--293.
\bibitem{AA} \textsc{A.M. Mathai, R. K. Saxena
H. J. Haubold,} The $H$-functions: Theory and Applications, Springer (2010).
\bibitem{SiMe1} \textsc{K. Mehrez, S. M. Sitnik}, Functional inequalities for the Mittag--Lefller functions, {\em Results Math}, 72 (1) (2017), 703--714.
\bibitem{ME3}\textsc{K. Mehrez, S. M. Sitnik}, Tur\'an type inequalities for classical and generalized Mittag-Leffler functions, {\em Anal Math.}, 44 (2018), 521--541.
\bibitem{Me1} \textsc{K. Mehrez}, Functional inequalities for the Wright functions, {\em Integral Trans. Special Funct.} 28 (2) (2017), 130--144.
\bibitem{45}\textsc{K. Mehrez,} New integral representations for the Fox-Wright functions and
its applications, {\em J. Math. Anal. Appl.,} 468 (2) (2018), 650--673.
\bibitem{46}\textsc{K. Mehrez}, Monotonicity patterns and functional inequalities for classical
and generalized Wright functions, {\em  Math. Inequal. Appl.,} 22 (3) (2019),  901--916.
\bibitem{47}\textsc{K. Mehrez, S.M. Sitnik,} Functional inequalities for Fox-Wright functions, {\em Ramanujan J.,}
50 (2) (2019), 263--287.
\bibitem{24} \textsc{A. A. Kilbas, H. M. Srivastava, J. J. Trujillo,} Theory and Applications of
Fractional Differential Equations, Elsevier, Amsterdam, 2006.
\bibitem{FF} A.A. Kilbas, M. Saigo, $H$-transforms and applications, Analytical Methods and Special
Functions, Volume 9, Chapman  Hall/CRC, 2004.
\bibitem{Z}\textsc{E. M. Wright,} The asymptotic expansion of the generalized hypergeometric
function, {\em Journal London Math. Soc.} 10 (1935), 287--293.
\bibitem{P} \textsc{T. K. Pog\'any, H. M. Srivastava}, Some Mathieu-type series associated with the Fox--Wright function, {\em Comput. Math. Appl.}, 57 (2009), 127--140.
\end{thebibliography}
\end{document}